\newtheorem{theorem}{Theorem}
\newtheorem{lemma}[theorem]{Lemma}
\newtheorem{question}[theorem]{Question}
\newtheorem{proposition}[theorem]{Proposition}
\title{Functions of constant geodesic X-ray transform}
\author{Joonas Ilmavirta}
\thanks{Department of Mathematics and Statistics, University of Jyv\"askyl\"a, Finland. \texttt{joonas.ilmavirta@jyu.fi}}
\author{Gabriel P. Paternain}
\thanks{Department of Pure Mathematics and Mathematical Statistics, University of Cambridge, UK. \texttt{g.p.paternain@dpmms.cam.ac.uk}}
\date{\today}
\newcommand{\Z}{\mathbb Z}
\newcommand{\R}{\mathbb R}
\newcommand{\eps}{\varepsilon}
\newcommand{\abs}[1]{\left\lvert #1 \right\rvert}
\newcommand{\der}{\mathrm d}
\newcommand{\Der}[1]{\frac{\der}{\der #1}}
\newcommand{\A}{\mathcal A}
\newcommand{\sff}{I\!I}
\newcommand{\ip}[2]{\left\langle#1,#2\right\rangle}
\newcommand{\loc}{{\text{loc}}}
\newcommand{\Euc}{{\text{Eucl}}}
\newcommand{\bd}[1]{{\check{#1}}}
\newcommand{\dbd}[1]{\dot{\bd{#1}}}
\newcommand{\Order}{\mathcal O}
\begin{document}

\begin{abstract}
We show that the existence of a function in $L^{1}$ with constant geodesic X-ray transform imposes geometrical restrictions on the manifold. The boundary of the manifold has to be umbilical and in the case of a strictly convex Euclidean domain, it must be a ball. Functions with constant geodesic X-ray transform always exist on manifolds with rotational symmetry.
\end{abstract}

\keywords{X-ray transform, range analysis, integral geometry, inverse problems}

\subjclass[2010]{44A12, 53A35, 52A99}

\maketitle

\section{Introduction}

We study functions whose integral is~$1$ over every maximal geodesic on a simple Riemannian manifold.
A compact manifold is called simple if it is simply connected, there are no conjugate points, and the boundary is strictly convex.
It turns out that the existence of such a function severely restricts the geometry of the manifold.
The boundary has to be umbilical and a Euclidean domain with such a function is necessarily a ball.
Such functions do exist on Euclidean balls and spherically symmetric manifolds satisfying the Herglotz condition.
It may well be that this short list of examples is exhaustive.

Existing range characterizations (such as~\cite{PU:range}) concern functions that are smooth up to the boundary.
In our setting the integral over every geodesic is a non-zero constant and geodesics get shorter near the boundary, and therefore the function in question must blow up at the boundary.
Indeed, it follows from \cite[Theorem 2.2]{MNP:bayes-xrt} and injectivity of the geodesic X-ray transform~$I$ on simple manifolds (see e.g. the review~\cite{IM:review}) that if~$M$ is simple and $f\in L^1(M)$ is such that $I^*If\in C^\infty(M)$, then
\begin{equation}
\label{eq:f=dw}
f(x)=d(x,\partial M)^{-1/2}w(x)
\end{equation}
in a neighborhood of the boundary, where $w\in C^\infty(M)$.
Therefore such a function~$f$ is in~$L^1$ but not in~$L^p$ for any $p\geq 2$.
Throughout the paper~$C^\infty(M)$ refers to the space of functions that are smooth up to the boundary of the manifold~$M$.
When $f\in L^1(M)$, the notation $If\equiv1$ means that the integral is one over almost all geodesics, and similarly for~$I^*I$.
(The adjoint~$I^*$ is defined using a natural $L^2$ inner product at the boundary.) 

We have three main results.
We state them below and prove them in the subsequent sections.
The first one is a boundary determination result for functions of constant X-ray transform.
The boundary determination concerns both the values of the function near the boundary (in a rescaled sense due to the blowup) and the shape of the boundary itself.

We recall that a boundary point is called umbilical if the second fundamental form is conformal to the first fundamental form.
That is, the second fundamental form only depends on the magnitude, not direction, of a tangent vector of the boundary.
When we say that the boundary is umbilical, we mean that every boundary point is an umbilical one but the conformal factor need not be constant.

\begin{theorem}[Proven in section~\ref{sec:bdy-det}]
\label{thm:bdy-det}
Let~$M$ be a simple Riemannian manifold of dimension $n\geq2$.
Suppose $f\in L^1(M)$ integrates to~$1$ over almost all maximal geodesics.
Then the boundary is umbilical.

Moreover,~$f$ is of the form $f(x)=d(x,\partial M)^{-1/2}w(x)$ near the boundary with $w\in C^\infty(M)$ satisfying
\begin{equation}
w(x)
=
\sqrt{\frac{\sff(x)}{2\pi^2}}
\end{equation}
for all $x\in\partial M$, where the second fundamental form is regarded as a scalar function.
\end{theorem}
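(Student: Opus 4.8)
The representation $f(x)=d(x,\partial M)^{-1/2}w(x)$ near the boundary is already handed to us: the hypothesis $If\equiv1$ makes $I^*If$ equal to the constant $\abs{S^{n-1}}$ and hence smooth, so the cited results apply and produce a $w\in C^\infty(M)$ of this form. The plan is therefore to extract the boundary value $w|_{\partial M}$ and the umbilicity from the single condition that $\int_\gamma r^{-1/2}w\,\der t=1$ for (almost) every maximal geodesic $\gamma$, where $r=d(\,\cdot\,,\partial M)$. I would test this condition only against geodesics that enter near a fixed boundary point $p$ almost tangentially: such geodesics probe only an infinitesimal neighbourhood of $p$, so in the limit the sole surviving data are the number $w(p)$ and the local shape of $\partial M$ at $p$.

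Fix $p\in\partial M$ and use boundary normal coordinates $(r,y)$, in which $g=\der r^2+h_{\alpha\beta}(r,y)\,\der y^\alpha\der y^\beta$ and $\sff_{\alpha\beta}=-\tfrac12\partial_r h_{\alpha\beta}|_{r=0}$, a convention under which strict convexity makes $\sff$ positive. I take a unit-speed geodesic entering close to $p$ at a small angle $\theta$ with $\partial M$, so $\dot r(0)=\sin\theta$ and the tangential velocity is a near-unit vector $v$. The radial geodesic equation $\ddot r=\tfrac12\partial_r h_{\alpha\beta}\dot y^\alpha\dot y^\beta$ gives, to leading order at the boundary, $\ddot r\approx-\sff(v,v)$, so with $\kappa=\sff(v,v)>0$ the geodesic curves back toward $\partial M$. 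The natural rescaling $t=\theta\tau$, $r=\theta^2\rho$ then converts this into $\rho''=-\kappa+\Order(\theta)$ with $\rho(0)=0$ and $\rho'(0)\to1$, whose limit is the parabola $\rho(\tau)=\tau-\tfrac{\kappa}{2}\tau^2$ on $[0,2/\kappa]$.

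Under the same rescaling the length element and the singular weight combine exactly, leaving no residual power of $\theta$: $If=\int_0^{T}r^{-1/2}w\,\der t=\int_0^{T/\theta}\rho^{-1/2}\,w(\gamma)\,\der\tau$. Since the geodesic shrinks to $p$, $w(\gamma)\to w(p)$ uniformly, and $\rho$ vanishes like a square root at both endpoints, so dominated convergence yields $If\to w(p)\int_0^{2/\kappa}(\tau-\tfrac{\kappa}{2}\tau^2)^{-1/2}\der\tau$. Evaluating via the beta integral $\int_0^1(s(1-s))^{-1/2}\der s=\pi$ gives $\int_0^{2/\kappa}(\tau-\tfrac\kappa2\tau^2)^{-1/2}\der\tau=\pi\sqrt{2/\kappa}$, so the limit of $If$ equals $w(p)\,\pi\sqrt{2/\kappa}$. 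Because $If\equiv1$ along almost every geodesic, picking admissible sequences with $\theta\to0$ forces $w(p)\,\pi\sqrt{2/\kappa}=1$, i.e. $w(p)=\sqrt{\kappa/(2\pi^2)}$. As $w(p)$ is one fixed number, independent of the direction $v$, the normal curvature $\kappa=\sff(v,v)$ cannot depend on $v$; hence $\partial M$ is umbilical with scalar second fundamental form $\sff(p)=\kappa$, and $w(p)=\sqrt{\sff(p)/(2\pi^2)}$.

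I expect the main difficulty to be making this limit rigorous rather than discovering it. One must control the error in the rescaled equation uniformly on $[0,2/\kappa]$, so that $\rho$ converges in $C^1$ and stays bounded below by a fixed multiple of $\tau(2/\kappa-\tau)$ near the endpoints; justify dominated convergence across the two integrable singularities of $\rho^{-1/2}$; and handle the ``almost every geodesic'' hypothesis by selecting, for each $v$, a sequence of geodesics along which $If=1$ genuinely holds and which converge to the tangential limit. Continuity of $v\mapsto\sff(v,v)$ then promotes the resulting almost-everywhere identity to the pointwise umbilicity statement.
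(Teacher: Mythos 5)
Your proposal is correct and follows essentially the same route as the paper: both obtain $f=d(\,\cdot\,,\partial M)^{-1/2}w$ with smooth $w$ from the mapping properties of $I^*I$, then test $If\equiv1$ on short, almost tangent geodesics at a boundary point and compute the limiting value $\pi\sqrt{2/\sff(v,v)}\,w(p)$ of the integral, which forces $w(p)=\sqrt{\sff(v,v)/(2\pi^2)}$ for every tangent direction $v$ and hence umbilicity. The only differences are cosmetic: you parametrize the short geodesics by entry angle and evaluate the limit via a parabolic rescaling and dominated convergence, whereas the paper parametrizes by the depth of the tip and carries explicit $\Order(\sqrt{h})$ error expansions (its lemmas on asymptotic integrals), both computations reducing to the same beta integral.
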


If the function~$f$ is assumed to be of the form~\eqref{eq:f=dw} for a continuous~$w$, then the conclusion holds locally at any strictly convex boundary point, using only short geodesic near that point.
In the proof of theorem~\ref{thm:bdy-det} integrals over long geodesics are only used to show that $f$ has to have the specific form near the boundary.

If $\dim(M)=2$, then the boundary is always umbilical.
Therefore theorem~\ref{thm:bdy-det} does not constrain the geometry of a simple surface, but it still gives a boundary determination result for the function~$f$.

Our second result is a characterization of all Euclidean domains having a function of constant X-ray transform.

\begin{theorem}[Proven in section~\ref{sec:Rn}]
\label{thm:Rn}
If $\Omega\subset\R^n$, $n\geq2$, is a smooth, bounded, and strictly convex domain, then the following are equivalent:
\begin{enumerate}
\item
There exists a function $f\in L^1(\Omega)$ so that integral of~$f$ over almost every line meeting~$\Omega$ is~$1$.
\item
The domain~$\Omega$ is a ball.
\end{enumerate}
Moreover, the function~$f$ is unique when it exists.
\end{theorem}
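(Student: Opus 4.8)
The plan is to prove Theorem~\ref{thm:Rn} by establishing both implications, with most of the work in $(2)\Rightarrow(1)$ being an explicit construction, and $(1)\Rightarrow(2)$ relying on Theorem~\ref{thm:bdy-det} together with a rigidity argument. Let me think about each direction.

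For $(2)\Rightarrow(1)$, the domain is a ball, say of radius $R$ centered at the origin. By rotational symmetry I expect the function of constant X-ray transform to be radial, $f(x)=g(\abs{x})$. For a chord of the ball at distance $s$ from the center (so the chord has half-length $\sqrt{R^2-s^2}$), the integral of $f$ along that chord can be written, after parametrizing by the distance $r=\abs{x}$ from the center, as an Abel-type integral transform of $g$ with kernel involving $(r^2-s^2)^{-1/2}$. Setting this integral equal to the constant $1$ for every $s\in[0,R)$ gives an Abel integral equation, which inverts explicitly. I would carry out this inversion to produce the unique radial $g$, and then verify directly that it lies in $L^1$ and that its leading boundary behavior matches the $d(x,\partial\Omega)^{-1/2}$ blowup predicted by Theorem~\ref{thm:bdy-det} (here $\sff$ is constant, equal to $1/R$ for the sphere of radius $R$, so $w=\sqrt{1/(2\pi^2 R)}$ on the boundary — a useful consistency check).

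For $(1)\Rightarrow(2)$, I would feed the hypothesis into Theorem~\ref{thm:bdy-det}. In the Euclidean setting a simple domain is exactly a bounded strictly convex domain, so the theorem applies and tells me $\partial\Omega$ is umbilical and, more precisely, that the boundary value of $w$ is $w(x)=\sqrt{\sff(x)/(2\pi^2)}$. The key point is then the following chain of reasoning: an umbilical hypersurface in $\R^n$ for $n\geq3$ is a classical rigidity statement — a connected umbilical hypersurface must be (part of) a sphere or a hyperplane, and since $\partial\Omega$ is compact and strictly convex it must be a round sphere, hence $\Omega$ is a ball. The clean case is therefore $n\geq3$, where umbilicity alone forces the conclusion.

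The main obstacle is the surface case $n=2$, where \emph{every} curve is trivially umbilical, so Theorem~\ref{thm:bdy-det} gives no shape information and the umbilicity route fails entirely. Here I would instead extract geometric information from the precise boundary value $w=\sqrt{\sff/(2\pi^2)}$ combined with a global constraint. The plan is to derive further necessary conditions by examining integrals over longer chords, not just the boundary asymptotics: knowing $f=d(x,\partial\Omega)^{-1/2}w(x)$ with a \emph{specific} $w$ at the boundary, the requirement that the line integral equals the \emph{same} constant for all lines — including those passing through the interior — should overdetermine the boundary curvature function $\sff$ and force it to be constant, i.e.\ force $\partial\Omega$ to be a circle by the converse of the classical fact that constant curvature characterizes circles. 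I expect this overdetermination argument to be the technical heart of the proof; one natural tool is to differentiate the constancy relation $If\equiv 1$ with respect to the line parameters and compare the resulting identities against the known boundary data, or alternatively to exploit injectivity of $I$ to conclude that the explicit radial candidate from the ball case is the \emph{only} possible $f$, and then show that its existence is incompatible with any non-circular boundary. Finally, uniqueness of $f$ in all dimensions follows immediately from injectivity of the geodesic X-ray transform on simple manifolds: if $f_1,f_2$ both have $If_j\equiv1$, then $I(f_1-f_2)\equiv0$, whence $f_1=f_2$.
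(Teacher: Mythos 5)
Your proposal is sound for $n\geq3$ and for the existence/uniqueness parts, and there it coincides with the paper: Theorem~\ref{thm:bdy-det} gives umbilicity of $\partial\Omega$, the classical rigidity of umbilical hypersurfaces (compact, hence a sphere) gives the ball, the explicit radial function handles existence (the paper simply exhibits $f(x)=\bigl(\pi\sqrt{1-\abs{x}^2}\bigr)^{-1}$; your Abel-inversion route is what the paper does in Proposition~\ref{prop:radial} for the general rotationally symmetric case), and uniqueness follows from injectivity of $I$ exactly as you say. The genuine gap is the case $n=2$, which you correctly identify as the obstacle but do not actually close. What you offer there is a program, not a proof: ``should overdetermine the boundary curvature'' and ``I expect this overdetermination argument to be the technical heart'' are the key steps, and neither of your two suggested routes is developed. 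The second route is moreover circular: injectivity of $I$ on $\Omega$ tells you at most one $f$ satisfies $If\equiv1$ \emph{on $\Omega$}, but the ``radial candidate'' lives on a disc, not on $\Omega$, so there is nothing to compare it with until you already know $\Omega$ is a disc — which is what you are trying to prove. Also note a nearby pitfall: the most natural global constraint, that all chords in a fixed direction family give total mass $\int f = b(\theta)-a(\theta)$ constant, only shows that $\Omega$ has constant width, and constant width does \emph{not} characterize the disc.

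The paper closes the two-dimensional case with a concrete moment argument via the projection slice theorem (Fubini in line coordinates). Writing lines as $L_{r,\theta}=\{x:\ip{x}{v_\theta}=r\}$ and letting $a(\theta),b(\theta)$ be the support functions of $\Omega$, the hypothesis $If\equiv1$ gives
\begin{equation}
\int_\Omega f(x)\,h(v_\theta\cdot x)\,\der x=\int_{a(\theta)}^{b(\theta)}h(r)\,\der r
\end{equation}
for every bounded test function $h$. Taking $h\equiv1$ yields constant width $b-a\equiv w$; taking $h(t)=t$ yields
\begin{equation}
v_\theta\cdot\int_\Omega f(x)\,x\,\der x=\tfrac12 w^2+w\,a(\theta),
\end{equation}
and since $\partial_\theta^2 v_\theta=-v_\theta$, differentiating twice in $\theta$ gives the ODE $a''+a=-\tfrac12 w$, whose general solution is $a(\theta)=-\tfrac12 w+\ip{z}{v_\theta}$ for some $z\in\R^2$. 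Translating by $-z$ makes both support functions constant, so $\Omega$ is a disc. This is the missing ingredient in your argument; some device of this kind (first moments of $f$ against the line parametrization, turning constancy of $If$ into an ODE for the support function) is needed, because in two dimensions the boundary determination theorem genuinely carries no shape information.
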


In fact, uniqueness holds on any simple manifold, as follows from the mapping properties of the normal operator.
If~$If$ is constant, so is~$I^*If$, and by bijectivity of~$I^*I$ exactly one function maps into a given constant function.

A stronger version of theorem~\ref{thm:Rn} is given in theorem~\ref{thm:Rn-local}.
The conclusion is the same but $If(\gamma)=1$ is only required for short~$\gamma$.

The existence of such functions in the ball (see equation~\eqref{eq:radial-f}) leads to a non-uniqueness phenomenon in all Euclidean domains.
By extending the function from a ball by zero we obtain a function whose X-ray transform obtains two values, and linear combinations of such functions produce functions whose X-ray transform is piecewise constant.
In quantum mechanical contexts, where the X-ray transform~$I$ appears in expressions of the form $\exp(-iIf)$ (see e.g.~\cite{I:qm}), functions whose X-ray transform takes values in~$2\pi\Z$ are transparent.

We expect that theorem~\ref{thm:Rn} holds for the Radon transform as well, with a similar proof using integral moments.
With a slicing argument as the one used for theorem~\ref{thm:Rn-local}, one should be able to generalize the result further to any $d$-plane Radon transform.
It was indeed recently verified~\cite{DS:radon} that if the Radon transform depends only on the distance of the hyperplane to the nearest parallel hyperplane tangent to the boundary, then the domain has to be a ball.
No integrable function has constant Radon transform in~$\R^n$ for $n\geq3$.

Theorem~\ref{thm:Rn} is somewhat similar to the Pompeiu problem. This is a conjecture stating that if a non-zero continuous function integrates to zero over every congruent copy of a simply connected Lipschitz domain, then the domain must be a ball.
For a more detailed description of the problem and a verification of the conjecture in three dimensions, see~\cite{R:pompeiu}.

The third and last result gives examples of non-Euclidean manifolds where functions of constant X-ray transform exist.
The example assumes symmetry, but the Herglotz condition is weaker than simplicity (see e.g.~\cite{M:numeric}).

\begin{proposition}[Proven in section~\ref{sec:radial}]
\label{prop:radial}
If the smooth function $c\colon [0,1]\to(0,\infty)$ satisfies the Herglotz condition
\begin{equation}
\label{eq:Herglotz}
\Der{r}\left(\frac{r}{c(r)}\right)>0,
\end{equation}
then the radial function
\begin{equation}
\label{eq:f(r)}
f(r)
=
\frac{c(r)-rc'(r)}{\pi c(r)\sqrt{c(1)^{-2}-(r/c(r))^2}}
\end{equation}
integrates to~$1$ over every maximal geodesic in the ball $\bar B(0,1)\subset\R^n$, $n\geq2$, with the conformally Euclidean metric $c^{-2}(r)g_\Euc$.
\end{proposition}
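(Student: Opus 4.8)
The plan is to exploit the rotational symmetry to reduce the integral over each maximal geodesic to a one-dimensional Abel-type integral, and then evaluate that integral explicitly. Since both the metric and~$f$ are radial, it suffices to work in a fixed two-dimensional plane through the origin, where I write the metric in polar coordinates as $c^{-2}(r)(\der r^2+r^2\der\theta^2)$. Rotational symmetry yields a conserved Clairaut quantity: parametrizing by arclength~$s$, the angular momentum $L=c^{-2}(r)r^2\,\der\theta/\der s$ is constant along each geodesic. Introducing $u(r)=r/c(r)$, a short computation shows $L=u(r_{\min})$, where $r_{\min}$ is the radius of closest approach to the origin. The Herglotz condition~\eqref{eq:Herglotz} says precisely that $u$ is strictly increasing, which guarantees that $r\mapsto u(r)$ is a diffeomorphism onto $[0,u(1)]$, that every geodesic descends to a unique closest approach and then escapes to the boundary with no trapping, and that maximal geodesics are exactly these symmetric curves, indexed by $r_{\min}\in[0,1)$ or equivalently by $L\in[0,u(1))$.

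Next I would compute the line element along a geodesic with Clairaut constant~$L$. From the unit-speed relation together with the conservation law one obtains $c^{-2}(r)(\der r/\der s)^2=1-L^2/u(r)^2$, hence $\der s/\der r=u(r)/\bigl(c(r)\sqrt{u(r)^2-L^2}\bigr)$ on the outgoing branch. Splitting the maximal geodesic into its two symmetric halves, the integral becomes
\begin{equation}
\int f\,\der s = 2\int_{r_{\min}}^1 \frac{f(r)\,u(r)}{c(r)\sqrt{u(r)^2-L^2}}\,\der r.
\end{equation}
Changing variables to $p=u(r)$, legitimate by the Herglotz condition, converts this into an Abel integral $2\int_L^{P} g(p)(p^2-L^2)^{-1/2}\,\der p$, where $P=u(1)=c(1)^{-1}$ and $g$ is determined by~$f$ through $p=u(r)$.

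The final step is to insert the claimed formula~\eqref{eq:f(r)}. A direct calculation, using $u'(r)=(c(r)-rc'(r))/c(r)^2$, shows that it corresponds exactly to $g(p)=p/\bigl(\pi\sqrt{P^2-p^2}\bigr)$, so the integral to evaluate is $\frac{2}{\pi}\int_L^{P} p\bigl[(P^2-p^2)(p^2-L^2)\bigr]^{-1/2}\,\der p$. Substituting $q=p^2$ turns this into $\frac{1}{\pi}\int_{L^2}^{P^2}\bigl[(P^2-q)(q-L^2)\bigr]^{-1/2}\,\der q$, and the elementary identity $\int_a^b[(b-q)(q-a)]^{-1/2}\,\der q=\pi$ gives the value~$1$, independent of~$L$. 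This proves that~$f$ integrates to~$1$ over every maximal geodesic.

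I expect the main obstacle to be the geometric bookkeeping rather than the final integral: one must justify carefully, using the Herglotz condition, that each maximal geodesic conforms to the picture above — it descends monotonically to a single closest-approach radius~$r_{\min}$ and then ascends monotonically back to the boundary, so that the ``two symmetric halves'' description and the change of variables $p=u(r)$ are valid along the whole geodesic. The limiting diametrical geodesic ($L=0$, passing through the origin) and the integrability of~$f$ near $r=1$, where the square root in~\eqref{eq:f(r)} degenerates, should be checked separately but follow from the same monotonicity.
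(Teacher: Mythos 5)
Your proof is correct, but its logical direction is the reverse of the paper's, and the key computation is different. The paper quotes the forward formula for the geodesic integral of a radial function (the Abel transform $\A$ associated with $\rho(r)=r/c(r)$) from the cited work on Abel transforms, then feeds the requirement $\A f\equiv1$ into the explicit Abel \emph{inversion} formula from that same reference: it evaluates the resulting integral as $\frac{\pi}{2}-\arcsin(\rho(x)/\rho(1))$ and differentiates to obtain the claimed expression for~$f$. The formula~\eqref{eq:f(r)} is thus \emph{derived} rather than verified, and the inversion-formula route also shows where the formula comes from and that it is the unique radial $L^1$ function with this property. You instead derive the forward Abel-type formula from scratch via Clairaut's conserved quantity $L=c^{-2}(r)r^{2}\,\der\theta/\der s$ and then \emph{verify} directly that the given~$f$ integrates to one, using the substitutions $p=u(r)$ and $q=p^2$ together with the elementary identity $\int_a^b[(b-q)(q-a)]^{-1/2}\,\der q=\pi$; your final integral
\begin{equation}
\frac{2}{\pi}\int_L^{P} p\bigl[(P^2-p^2)(p^2-L^2)\bigr]^{-1/2}\,\der p=1
\end{equation}
is exactly right, including the radial case $L=0$. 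Your route is more self-contained (it needs neither the forward nor the inverse formula of the cited reference, only standard Clairaut geometry), at the cost of presenting the formula as a lucky guess and of having to justify by hand the structural facts you flag at the end — monotone descent to a unique closest-approach radius, symmetry of the two halves, and no trapping under the Herglotz condition — which in the paper are absorbed into the citation. Both arguments share the same initial reduction to two dimensions by the reflection-symmetry argument.
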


We point out that all rotationally symmetric Riemannian metrics on a Euclidean ball are of this form~\cite[Proposition C.1]{dHIK:rigid}.

Due to the Euclidean result of theorem~\ref{thm:Rn}, it seems natural to speculate that the radial nature of the examples given in proposition~\ref{prop:radial} is not accidental, hence we pose the following:


\begin{question}
Let~$M$ be a simple Riemannian manifold with $\dim(M)\geq2$.
If there exists a function $f\in L^1(M)$ that integrates to~$1$ over every geodesic, is it true that~$M$ is spherically symmetric?
\end{question}

\section*{Acknowledgements}
J.I.\ was supported by the Academy of Finland (decision 295853).
G.P.P.\ was supported by an EPSRC grant (EP/R001898/1).
We thank Gunther Uhlmann, Plamen Stefanov, and Mikko Salo for discussions and the anonymous referees for a number of useful suggestions.

\section{Boundary determination}
\label{sec:bdy-det}

\subsection{Boundary normal coordinates}

We use boundary normal coordinates $x=(x^0,\bd x)$, where~$x^0$ is the distance from~$x$ to~$\partial M$ and~$\bd x$ is the closest boundary point to~$x$.
Latin indices $i,j,k,\dots$ go from~$1$ to $n-1$ and they correspond to a local system of coordinates on~$\partial M$.
These coordinates are defined in a neighborhood of the boundary, and we shall be working in such a neighborhood.

The second fundamental form as a function of two vectors $a,b\in T_xM$ can be written in these coordinates as
\begin{equation}
\sff(a,b)
=
-\frac12\frac{\partial}{\partial x^0}g_{ij}(x)a^ib^j.
\end{equation}
This formula extends the second fundamental form from the boundary to a neighborhood: the vectors need not be tangential to the boundary and the base point need not be on the boundary.
The same extension has been used in~\cite{KKL:spectral,I:bdy-det}.

The zeroth component of the geodesic equation is
\begin{equation}
\ddot\gamma^0(t)
+
\sff(\dot\gamma,\dot\gamma)
=
0.
\end{equation}
We will be following the notation and ideas of~\cite{I:bdy-det} quite closely in this section, but we will not use the results obtained in that paper.

\subsection{Asymptotic integrals}

The goal of this subsection is to estimate integrals of functions of the form~\eqref{eq:f=dw} using boundary normal coordinates.

Consider a point $x\in\partial M$ where the second fundamental form is positive definite.
We will work in a small neighborhood of~$x$, and we may shrink the neighborhood implicitly when needed.
All error estimates are uniform near~$x$.

We study short geodesics defined as follows.
Take any $v\in S_x(\partial M)$ (a unit tangent vector) and a small $h>0$.
Parallel translate~$v$ by distance~$h$ along the geodesic normal to the boundary.
(In Euclidean geometry this amounts to shifting the vector~$v$ from~$x$ to $x+h\nu$, where~$\nu$ is the inward pointing unit normal vector.)
Then $\gamma_{v,h}\colon I_{v,h}\to M$ is the maximal geodesic in the obtained direction.
We fix parametrization so that $\gamma_{v,h}^0(0)=h$ and $I_{v,h}=[-\tau_-^{v,h},\tau_+^{v,h}]$.
To avoid clutter, we will mostly denote simply $\gamma_{v,h}=\gamma$.

\begin{lemma}
\label{lma:d-int}
The geodesic defined above satisfies
\begin{equation}
\int_{I_{v,h}}\gamma^0(t)^{-1/2}\der t
=
\sqrt{\frac{2\pi^2}{\sff(v,v)}}
+
\Order(\sqrt{h})
\end{equation}
as $h\to0$ for any fixed~$v$.
\end{lemma}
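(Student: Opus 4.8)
The plan is to reduce the whole computation to the zeroth component of the geodesic equation, $\ddot\gamma^0 = -\sff(\dot\gamma,\dot\gamma)$, which governs the height $\gamma^0$ of the geodesic above the boundary. By construction the initial velocity is tangential, so $\gamma^0(0)=h$ and $\dot\gamma^0(0)=0$; since $\sff$ is positive definite near~$x$, the time $t=0$ is a nondegenerate maximum and $\gamma^0$ is a slightly perturbed downward parabola. Heuristically $\gamma^0(t)\approx h-\frac12\sff(v,v)t^2$, the geodesic meets the boundary at $t\approx\pm\sqrt{2h/\sff(v,v)}$, and the integral reduces to an elementary one; the real work is making this precise with a uniform $\Order(\sqrt h)$ error.

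First I would record crude a priori bounds. Bounding $\sff(\dot\gamma,\dot\gamma)$ above and below by positive constants on the neighborhood (strict convexity) and integrating the geodesic equation twice from $t=0$ gives $c_-t^2\le h-\gamma^0(t)\le c_+t^2$, hence $\tau_\pm^{v,h}\asymp\sqrt h$ and $\abs{I_{v,h}}=\Order(\sqrt h)$; consequently the geodesic stays within distance $\Order(\sqrt h)$ of~$x$ and its direction stays within $\Order(\sqrt h)$ of~$v$. I would then upgrade this to a relative-error estimate. Writing $\lambda:=\sff(v,v)$ and using that the extended $\sff$ is Lipschitz along the geodesic, one gets $\ddot\gamma^0(s)=-\lambda+\Order(h+\abs s)$ uniformly, and integrating twice yields, uniformly for $t\in I_{v,h}$,
\begin{equation}
\dot\gamma^0(t)=-\lambda t\,(1+\Order(\sqrt h)),\qquad h-\gamma^0(t)=\frac12\lambda t^2\,(1+\Order(\sqrt h)).
\end{equation}
The point of phrasing these as relative errors is that they stay accurate even as $t\to0$, where $\dot\gamma^0$ degenerates.

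Finally I would evaluate the integral by the substitution $u=\gamma^0(t)$ on each of the two half-intervals. On $[0,\tau_+^{v,h}]$ the map $t\mapsto\gamma^0(t)$ is a decreasing bijection onto $[0,h]$, and the estimates above give $\abs{\dot\gamma^0(t(u))}=\sqrt{2\lambda(h-u)}\,(1+\Order(\sqrt h))$, so that
\begin{equation}
\int_0^{\tau_+^{v,h}}\gamma^0(t)^{-1/2}\der t=\frac{1+\Order(\sqrt h)}{\sqrt{2\lambda}}\int_0^h\frac{\der u}{\sqrt{u(h-u)}}=\frac{\pi}{\sqrt{2\lambda}}+\Order(\sqrt h),
\end{equation}
using the Beta integral $\int_0^h[u(h-u)]^{-1/2}\der u=\pi$. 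Adding the symmetric contribution from $[-\tau_-^{v,h},0]$ gives $2\cdot\frac{\pi}{\sqrt{2\lambda}}=\sqrt{2\pi^2/\sff(v,v)}$, as claimed.

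The main obstacle is the uniformity of the error across the two integrable singularities: the factor $u^{-1/2}$ at the turning points and the vanishing of $\dot\gamma^0$ at $t=0$. Pulling a single factor $1+\Order(\sqrt h)$ out of the $u$-integral is only legitimate because the relative-error form of the parabola estimate holds uniformly up to $t=0$, and this in turn rests on the refined bound $\ddot\gamma^0(s)=-\lambda+\Order(h+\abs s)$ rather than the crude $\Order(\sqrt h)$. Securing that refinement, and thereby controlling the integrand near the degenerate point, is the delicate step; everything else is bookkeeping with the a priori bounds.
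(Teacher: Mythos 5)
Your proposal is correct and takes essentially the same route as the paper: both reduce everything to the zeroth component of the geodesic equation, establish the parabola approximation for $\gamma^0$, change variables $u=\gamma^0(t)$ on each half of the geodesic with Jacobian $\dot\gamma^0{}^{-1}$, and evaluate the resulting integral $\int_0^h[u(h-u)]^{-1/2}\der u=\pi$. The only (harmless) difference is bookkeeping: you carry a multiplicative $(1+\Order(\sqrt h))$ error and work with $\lambda=\sff(v,v)$ at the boundary point, whereas the paper carries an additive $\Order(1)$ error on $\dot\gamma^0{}^{-1}$ with $a=\sff(\dot\gamma(0),\dot\gamma(0))$ and converts to $\sff(v,v)$ at the end.
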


\begin{proof}
We streamline the notation by leaving out the parameters $v$ and $h$, so that $\gamma_{v,h}=\gamma$ and $\tau_\pm^{v,h}=\tau_\pm$.
We denote $a=\sff(\dot\gamma(0),\dot\gamma(0))$.

It follows from the geodesic equation and Lipschitz continuity of~$\sff$ and Taylor approximations that
\begin{equation}
\label{eq:vv3}
\dot\gamma^0(t)
=
-at+\Order(t^2)
\end{equation}
and
\begin{equation}
\gamma^0(t)
=
h-\frac12at^2+\Order(t^3).
\end{equation}
By strict convexity $a>0$ and so $h-\gamma^0(t)$ is comparable with~$t^2$.
From
\begin{equation}
\gamma^0
=
h-\frac12at^2+\Order((h-\gamma^0)^{3/2})
\end{equation}
we find
\begin{equation}
\label{eq:t}
t
=
\pm\sqrt{\frac2a(h-\gamma^0)+\Order((h-\gamma^0)^{3/2})}
=
\pm\sqrt{\frac2a(h-\gamma^0)}
+
\Order(h-\gamma^0).
\end{equation}
Using this in equation~\eqref{eq:vv3} gives
\begin{equation}
\dot\gamma^0
=
\mp\sqrt{2a(h-\gamma^0)}+\Order(h-\gamma^0)
\end{equation}
and so using the Taylor approximation $(x+y)^{-1}=x^{-1}-x^{-2}y+\Order(x^{-3}y^2)$ leads to
\begin{equation}
\dot\gamma^0(t)^{-1}
=
\mp[2a(h-\gamma^0)]^{-1/2}+\Order(1).
\end{equation}
The different signs correspond to the different halves of the geodesic going down from the tip from the point of view of boundary normal coordinates.

We are now ready to start computing the integral.
We split it in two halves:
\begin{equation}
\int_{I_{v,h}}\gamma^0(t)^{-1/2}\der t
=
\sum_{\pm}\int_0^{\tau_\pm}\gamma^0(\pm t)^{-1/2}\der t.
\end{equation}
On each half we change the integration variable from $t\in[0,\tau_\pm]$ to $z=\gamma^0(t)\in[0,h]$.
The Jacobian is~$\dot\gamma^0(t)^{-1}$ which we computed above.
We get
\begin{equation}
\begin{split}
\int_0^{\tau_\pm}\gamma^0(\pm t)^{-1/2}\der t
&=
\int_0^h z^{-1/2} ([2a(h-z)]^{-1/2}+\Order(1))\der z
\\&=
\frac{\pi}{\sqrt{2a}}
+\Order(\sqrt{h}),
\end{split}
\end{equation}
where we used the substitution $z=h\sin^2\theta$.
This leads to
\begin{equation}
\int_{I_{v,h}}\gamma^0(t)^{-1/2}\der t
=
\frac{2\pi}{\sqrt{2a}}
+
\Order(\sqrt{h}).
\end{equation}
The proof of the lemma is concluded by noticing that $a=\sff(v,v)+\Order(h)$.
\end{proof}

Applying equation~\eqref{eq:t} when $\gamma^0=0$ (where the geodesic meets the boundary) shows that~$\tau_\pm$ is comparable with~$\sqrt{h}$ when~$h$ is sufficiently small.
Therefore $\abs{I_{v,h}}=\Order(\sqrt{h})$.
As $\abs{\dot\gamma(t)}\equiv1$, we have that $\dbd\gamma(t)$ is uniformly bounded and thus $d(\bd \gamma(t),\bd x)=\Order(\abs{I_{v,h}})=\Order(\sqrt{h})$.

\begin{lemma}
\label{lma:dw-int}
For the geodesic defined above and $w\in C^\infty(M)$ we have
\begin{equation}
\int_{I_{v,h}}\gamma^0(t)^{-1/2}w(\gamma(t))\der t
=
\sqrt{\frac{2\pi^2}{\sff(v,v)}}w(x)
+
\Order(\sqrt{h})
\end{equation}
as $h\to0$ for any fixed~$v$.
\end{lemma}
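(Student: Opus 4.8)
The plan is to reduce Lemma~\ref{lma:dw-int} to the already-established Lemma~\ref{lma:d-int} by controlling the difference $w(\gamma(t))-w(x)$ along the geodesic. Writing
\begin{equation}
\int_{I_{v,h}}\gamma^0(t)^{-1/2}w(\gamma(t))\der t
=
w(x)\int_{I_{v,h}}\gamma^0(t)^{-1/2}\der t
+
\int_{I_{v,h}}\gamma^0(t)^{-1/2}\bigl(w(\gamma(t))-w(x)\bigr)\der t,
\end{equation}
the first term is exactly $\sqrt{2\pi^2/\sff(v,v)}\,w(x)+\Order(\sqrt h)$ by Lemma~\ref{lma:d-int}, so everything hinges on showing the second integral is $\Order(\sqrt h)$.

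The key estimate is that $\abs{w(\gamma(t))-w(x)}=\Order(\sqrt h)$ uniformly for $t\in I_{v,h}$. Since $w\in C^\infty(M)$ is Lipschitz near~$x$, this reduces to bounding the Riemannian distance $d(\gamma(t),x)$. That distance splits into a normal and a tangential part: the normal component is $\gamma^0(t)\leq h$, and the tangential component is $d(\bd\gamma(t),\bd x)$. The remark immediately preceding the lemma already records both facts we need, namely $\abs{I_{v,h}}=\Order(\sqrt h)$ (from applying~\eqref{eq:t} at $\gamma^0=0$) and, since $\abs{\dot\gamma}\equiv1$ forces $\dbd\gamma$ to be uniformly bounded, $d(\bd\gamma(t),\bd x)=\Order(\sqrt h)$. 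Hence $d(\gamma(t),x)=\Order(\sqrt h)$ and therefore $\abs{w(\gamma(t))-w(x)}=\Order(\sqrt h)$.

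With this pointwise bound in hand, I would estimate
\begin{equation}
\abs{\int_{I_{v,h}}\gamma^0(t)^{-1/2}\bigl(w(\gamma(t))-w(x)\bigr)\der t}
\leq
\Order(\sqrt h)\int_{I_{v,h}}\gamma^0(t)^{-1/2}\der t.
\end{equation}
Since Lemma~\ref{lma:d-int} shows the remaining integral is bounded (it converges to $\sqrt{2\pi^2/\sff(v,v)}$), the whole right-hand side is $\Order(\sqrt h)$, which is exactly the claimed error term. Combining the two pieces yields the lemma.

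The only genuinely delicate point is justifying that $\int_{I_{v,h}}\gamma^0(t)^{-1/2}\der t$ stays bounded as $h\to0$ despite the integrand blowing up; this is not automatic from the integrability of $z^{-1/2}$ alone, but it is precisely the content of Lemma~\ref{lma:d-int}, so I would simply invoke that lemma rather than redo the substitution $z=h\sin^2\theta$. Everything else is a routine application of uniform Lipschitz continuity of~$w$ and the length and tangential-displacement bounds on the short geodesics, all of which are already available in the excerpt; so I expect no substantial obstacle beyond assembling these ingredients cleanly.
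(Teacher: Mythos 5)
Your proposal is correct and follows essentially the same route as the paper: split off $w(x)$, apply Lemma~\ref{lma:d-int} to the main term, and bound the remainder using $d(\bd\gamma(t),\bd x)=\Order(\sqrt h)$ together with the $\Order(1)$ bound on $\int_{I_{v,h}}\gamma^0(t)^{-1/2}\der t$ from Lemma~\ref{lma:d-int}. The only cosmetic difference is that the paper splits the increment of $w$ into a normal part $\Order(y^0)$ (whose integral it bounds by $\Order(h)$) and a tangential part (bounded by $\Order(\sqrt h)$), whereas you absorb both into a single uniform $\Order(\sqrt h)$ Lipschitz bound; the estimates are the same with marginally coarser bookkeeping.
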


\begin{proof}
We first write the function~$w$ as
\begin{equation}
w(y)
=
w(x)
+\Order(y^0)
+\Order(d(\bd y,\bd x)).
\end{equation}
The integral of the leading term $w(x)$ produces the desired integral by lemma~\ref{lma:d-int}, so it remains to estimate the integrals of the error terms.

For the $\Order(y^0)$ term the integral is
\begin{equation}
\int_{I_{v,h}}\Order(\gamma^0(t)^{+1/2})\der t
=
\abs{I_{v,h}}\Order(h^{1/2})
=
\Order(h).
\end{equation}
On the other hand, $d(\bd \gamma(t),\bd x)=\Order(\sqrt{h})$.
Therefore
\begin{equation}
\int_{I_{v,h}}\gamma^0(t)^{-1/2}\Order(d(\bd \gamma(t),\bd x))\der t
=
\Order(\sqrt{h})
\int_{I_{v,h}}\gamma^0(t)^{-1/2}\der t.
\end{equation}
By lemma~\ref{lma:d-int} the integral $\int_{I_{v,h}}\gamma^0(t)^{-1/2}\der t$ is $\Order(1)$, so the integral of the $\Order(d(\bd y,\bd x))$ error term is $\Order(\sqrt{h})$.
\end{proof}

\subsection{Proof of the boundary determination result}

We are now ready to prove theorem~\ref{thm:bdy-det}.

\begin{proof}[Proof of theorem~\ref{thm:bdy-det}]
It follows from \cite[Theorem 2.2]{MNP:bayes-xrt} and injectivity of the geodesic X-ray transform on simple manifolds that the normal operator is a bijection $I^*I\colon \delta^{-1/2}C^\infty(M)\to C^\infty(M)$, where $\delta\in C^\infty$ is positive in the interior and coincides with the distance function to the boundary in a neighborhood of the boundary.
However, our~$f$ is only assumed to be in~$L^1(M)$.

Suppose $f\in L^1(M)$ is such that $If\equiv1$.
Then $I^*If\equiv c$ for some constant~$c$ depending on dimension and normalization of measures, and as a constant function it is smooth. By \cite[Theorem 4.8]{MNP:bayes-xrt} our function~$f$ is of the form~\eqref{eq:f=dw} near the boundary for some $w\in C^\infty(M)$.

By lemma~\ref{lma:dw-int} the integral of~$f$ over the short geodesic~$\gamma_{v,h}$ is
\begin{equation}
\sqrt{\frac{2\pi^2}{\sff(v,v)}}w(x)
+
\Order(\sqrt{h}).
\end{equation}
This equals one for all $h$, so letting $h\to0$ gives
\begin{equation}
\sqrt{\frac{2\pi^2}{\sff(v,v)}}w(x)
=
1,
\end{equation}
and so
\begin{equation}
w(x)
=
\sqrt{\frac{\sff(v,v)}{2\pi^2}}.
\end{equation}
This has to hold for all $v\in S_x(\partial M)$.
Therefore the second fundamental form is independent of direction, meaning that~$\partial M$ is umbilical at~$x$.
The second fundamental form can thus be regarded as a scalar, and with that interpretation we obtain the claimed formula.
\end{proof}

\section{Euclidean characterization}
\label{sec:Rn}

\subsection{Projection slice theorem and integral moments}

We can parametrize lines in the plane by $(r,\theta)\in\R\times\R$ as $L_{r,\theta}=\{x\in\R^2;\ip{x}{v_\theta}=r\}$, where $v_\theta=(\cos\theta,\sin\theta)$.
The projection slice theorem states that if $f\in L^1_c(\R^2)$ and $h\in L^\infty_\loc(\R)$, then
\begin{equation}
\label{eq:pst1}
\int_{\R^2}f(x)h(v_\theta\cdot x)\der x
=
\int_{\R}If(r,\theta)h(r)\der r.
\end{equation}
This is a version of Fubini's theorem written in terms of the planar Radon transform; see~\cite[Theorem 2.2]{Q:intro}.

We consider our X-ray transform problem in a smooth, strictly convex, and bounded planar domain $\Omega\subset\R^2$.
We consider $a\colon\R/2\pi\Z\to\R$ so that
\begin{equation}
a(\theta)
=
\inf_{x\in\Omega}\ip{x}{v_\theta}
\end{equation}
and $b\colon\R/2\pi\Z\to\R$ similarly with infimum replaced by supremum.
The domain~$\Omega$ lies between the lines~$L_{a(\theta),\theta}$ and~$L_{b(\theta),\theta}$ and is tangent to them for any~$\theta$.
Since~$\Omega$ is bounded and smooth, these functions are well-defined and smooth.

\begin{lemma}
\label{lma:pst}
Let $\Omega\subset\R^2$ be a smooth, strictly convex, and bounded domain.
Suppose $f\in L^1(\Omega)$ satisfies $If\equiv 1$.
Then
\begin{equation}
\label{eq:pst2}
\int_{\Omega}f(x)h(v_\theta\cdot x)\der x
=
\int_{a(\theta)}^{b(\theta)}h(r)\der r
\end{equation}
for all $h\in L^\infty_\loc(\R)$.
\end{lemma}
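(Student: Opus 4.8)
The plan is to read this off directly from the projection slice theorem~\eqref{eq:pst1}, the only real content being the explicit computation of the one-dimensional data $If(\cdot,\theta)$.

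First I would extend~$f$ by zero to all of~$\R^2$. Since $f\in L^1(\Omega)$ and~$\Omega$ is bounded, this yields a function in $L^1_c(\R^2)$, so~\eqref{eq:pst1} is applicable and gives
\begin{equation}
\int_{\Omega}f(x)h(v_\theta\cdot x)\der x
=
\int_{\R}If(r,\theta)h(r)\der r,
\end{equation}
where on the left I used that~$f$ vanishes outside~$\Omega$. The next and central step is to identify $If(r,\theta)$. Because~$f$ is supported in~$\Omega$, the integral over the whole line~$L_{r,\theta}$ agrees with the integral over the chord $L_{r,\theta}\cap\Omega$, that is, over a maximal geodesic. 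By strict convexity together with the definitions of $a(\theta)$ and $b(\theta)$, this chord has positive length precisely when $r\in(a(\theta),b(\theta))$, and for such~$r$ the hypothesis $If\equiv1$ forces $If(r,\theta)=1$. For $r<a(\theta)$ or $r>b(\theta)$ the line misses~$\Omega$ and $If(r,\theta)=0$, while the two tangent values $r=a(\theta),b(\theta)$ form an $r$-null set. Hence $If(\cdot,\theta)$ equals the indicator function of the interval $(a(\theta),b(\theta))$ for almost every~$r$.

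Substituting this into the right-hand integral, and noting that $h\in L^\infty_\loc(\R)$ makes the integrand bounded with compact support (so all integrals are finite), I obtain
\begin{equation}
\int_{\R}If(r,\theta)h(r)\der r
=
\int_{a(\theta)}^{b(\theta)}h(r)\der r,
\end{equation}
which combined with the display above is exactly~\eqref{eq:pst2}. I do not expect a genuine obstacle in this argument; the only point requiring care is the bookkeeping between the full-line transform appearing in~\eqref{eq:pst1} and the chord integral to which the hypothesis is phrased, and the fact that $If\equiv1$ is assumed only for almost every geodesic. Since ``almost every'' is with respect to the measure $\der r\,\der\theta$, Fubini gives that for almost every~$\theta$ the slice $If(\cdot,\theta)$ equals~$1$ for almost every~$r$ in $(a(\theta),b(\theta))$; this exceptional set, like the tangent lines, is $r$-null and therefore does not affect the integral.
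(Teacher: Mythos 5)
Your proof is correct and takes essentially the same route as the paper's: extend $f$ by zero, identify $If(\cdot,\theta)$ with the indicator function of $(a(\theta),b(\theta))$ using strict convexity and the hypothesis, and apply the projection slice theorem~\eqref{eq:pst1}. Your explicit handling of the almost-everywhere bookkeeping (via Fubini in the $(r,\theta)$ parametrization) is a minor elaboration of what the paper leaves implicit.
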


\begin{proof}
The assumption $If\equiv1$ can be rephrased as
\begin{equation}
If(r,\theta)
=
\begin{cases}
1, & a(\theta)<r<b(\theta)\\
0, & \text{otherwise}.
\end{cases}
\end{equation}
The claim follows from equation~\eqref{eq:pst1} when $f$ is extended by zero to the whole plane.
\end{proof}

\begin{lemma}
\label{lma:R2}
Let $\Omega\subset\R^2$ be a smooth, strictly convex, and bounded domain.
Suppose $f\in L^1(\Omega)$ satisfies $If\equiv 1$.
Then $\Omega$ is a disc.
\end{lemma}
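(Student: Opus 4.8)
The plan is to feed the two simplest monomial weights $h(r)=1$ and $h(r)=r$ into lemma~\ref{lma:pst} and translate the resulting identities into statements about the support function of~$\Omega$. The key observation is that $b$ is exactly the support function, $b(\theta)=\sup_{x\in\Omega}\ip{x}{v_\theta}$, and that $a$ is determined by it through the elementary relation $a(\theta)=-b(\theta+\pi)$, since the infimum in direction~$v_\theta$ is the negative of the supremum in direction $-v_\theta=v_{\theta+\pi}$. Because $\partial\Omega$ is smooth and strictly convex, $b$ is a smooth $2\pi$-periodic function and I may pass freely to its Fourier series $b(\theta)=\sum_n c_n e^{in\theta}$. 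The whole problem is then reduced to showing $c_n=0$ for all $\abs{n}\geq2$: a support function of the form $c_0+c_1e^{i\theta}+c_{-1}e^{-i\theta}$ is precisely that of a disc, the constant term giving the radius and the first harmonic encoding the position of the centre.

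First I would take $h\equiv1$. The left-hand side of~\eqref{eq:pst2} becomes the constant $\int_\Omega f\,\der x$, while the right-hand side is $b(\theta)-a(\theta)=b(\theta)+b(\theta+\pi)$. Hence this quantity is constant, i.e.\ $\Omega$ has constant width. In Fourier terms $\sum_n(1+(-1)^n)c_n e^{in\theta}$ is constant, which forces $c_n=0$ for every even $n\neq0$.

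Next I would take $h(r)=r$. The left-hand side is $\int_\Omega f(x)\ip{x}{v_\theta}\der x=\cos\theta\int_\Omega f x_1\,\der x+\sin\theta\int_\Omega f x_2\,\der x$, which is finite because~$\Omega$ is bounded, and which is a pure first harmonic in~$\theta$. The right-hand side is $\tfrac12(b(\theta)^2-a(\theta)^2)=\tfrac12(b(\theta)-a(\theta))(b(\theta)+a(\theta))$. By the previous step $b-a$ equals the positive constant width~$W$, and $b+a=b(\theta)-b(\theta+\pi)$, so the right-hand side equals $\tfrac{W}{2}(b(\theta)-b(\theta+\pi))$. Comparing harmonics, $b(\theta)-b(\theta+\pi)=2\sum_{n\text{ odd}}c_n e^{in\theta}$ must itself be a first harmonic, which forces $c_n=0$ for every odd~$n$ with $\abs{n}\geq3$.

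Combining the two steps leaves only $c_{-1},c_0,c_1$, so $b$ is the support function of a disc and therefore~$\Omega$ is a disc. The only genuine work is the parity bookkeeping that organizes the two moment identities so that together they annihilate every harmonic of order at least two—the even step coming from $h\equiv1$ and the odd step from $h(r)=r$. I expect this orchestration, together with the final geometric identification of a constant-plus-first-harmonic support function with a translated disc, to be the main point; the supporting facts (smoothness of~$b$, integrability of $fx_i$ on the bounded set~$\Omega$, and positivity of the width~$W$) are routine once this structure is in place.
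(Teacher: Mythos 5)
Your proof is correct and follows essentially the same route as the paper: both feed $h\equiv1$ and $h(r)=r$ into lemma~\ref{lma:pst}, use the first identity to get constant width, and use the second (simplified via the constant width) to force the support function to be a constant plus a first harmonic, which characterizes a disc. The only difference is cosmetic: the paper turns the first-moment identity into the ODE $a''(\theta)+a(\theta)=-\tfrac12 w$ via $\partial_\theta^2 v_\theta=-v_\theta$ and solves it, whereas you expand $b$ in a Fourier series and kill the harmonics of order at least two by parity---two equivalent ways of extracting the same conclusion.
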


\begin{proof}
Applying lemma~\ref{lma:pst} with the constant test function $h(t)=1$ yields
\begin{equation}
\int_{\R^2}f(x)\der x
=
b(\theta)-a(\theta).
\end{equation}
Since this holds for all~$\theta$, we conclude that there is a constant $w>0$ so that $b(\theta)=a(\theta)+w$ for all~$\theta$.
That is, the domain has constant width~$w$.
The disc is, however, not the only planar domain of constant width.

Applying lemma~\ref{lma:pst} with the test function $h(t)=t$ leads to
\begin{equation}
\label{eq:vv2}
v_\theta\cdot\int_{\R^2}f(x)x\der x
=
\frac12w^2+wa(\theta).
\end{equation}
Since $\partial_\theta^2v_\theta=-v_\theta$, differentiating~\eqref{eq:vv2} twice with respect to~$\theta$ gives the ODE
\begin{equation}
wa''(\theta)+wa(\theta)
=
-\frac12w^2.
\end{equation}
The general solution of this this ODE is
\begin{equation}
a(\theta)=-\frac12w+\ip{z}{v_\theta},
\end{equation}
where $z\in\R^2$.
Shifting the domain by $-z$ leads to $a(\theta)=-\frac12w$ and consequently $b(\theta)=\frac12w$.
Therefore~$\Omega$ is a disc of radius~$\frac12w$.
\end{proof}

The very last conclusion of the proof is intuitively obvious, but we sketch a proof for the sake of completeness.
Suppose~$a$ and~$b$ are the constant functions~$\mp\frac12w$.
It follows from the definition of~$a$ and~$b$ that~$\Omega$ is contained in the disc.
If~$\Omega$ was strictly smaller, there would be a small dent and~$b$ would be smaller than~$\frac12w$ in that direction.
Therefore~$\Omega$ is a disc.

\subsection{Proof of the characterization of Euclidean domains}

We are now ready to prove theorem~\ref{thm:Rn}.

\begin{proof}[Proof of theorem~\ref{thm:Rn}]
Suppose there is $f\in L^1(\Omega)$ so that $If\equiv1$.
If $n=2$, lemma~\ref{lma:R2} implies that~$\Omega$ is a disc.
In any dimension, theorem~\ref{thm:bdy-det} implies that~$\partial\Omega$ is umbilical.
If $n\geq3$, this implies that~$\Omega$ is a ball, as the only umbilical hypersurfaces in $\R^n$, $n\geq3$, are spheres and hyperplanes; see e.g.~\cite[Exercise 8.6c]{dC:RG}.

It remains to show that if~$\Omega$ is a ball, then a function $f\in L^1(\Omega)$ with $If\equiv1$ indeed exists.
Uniqueness of such~$f$ follows from injectivity of the X-ray transform.
The property is invariant under scaling and translation, and in the unit ball the function is given by
\begin{equation}
\label{eq:radial-f}
f(x)
=
\frac{1}{\pi\sqrt{1-\abs{x}^2}},
\end{equation}
as one can easily verify by direct calculation.
\end{proof}

\subsection{An alternative proof with partial data}

The X-ray transform is critically determined in two dimensions, so it is natural to use full data.
However, in higher dimensions the transform is overdetermined.
Indeed, the proof of theorem~\ref{thm:Rn} given above relies almost entirely on short line segments and a boundary determination result.
We used long lines to argue that~$f$ is of the form~\eqref{eq:f=dw} by way of the mapping properties if~$I^*I$.
The use of these lines can be circumnavigated, which leads to a stronger version of the theorem.
The proof is based on two geometrical lemmas.

Heuristically, the proof can be understood as follows:
If~$f$ integrates to~$1$ over all geodesics, this also holds in all two-dimensional planes intersected with the original domain.
By lemma~\ref{lma:R2} (the planar case of theorem~\ref{thm:Rn}) these intersections have to be discs.
When the plane is almost tangent to the boundary, the intersection is approximately an ellipse whose shape is determined by the second fundamental form.
Because the intersections are discs, the ellipse has to be a ball, and so the boundary is umbilical.

\begin{lemma}
\label{lma:line-length}
Let $\Omega\subset\R^n$ be a strictly convex smooth domain and $x\in\partial\Omega$.
Let~$\nu$ the inward unit normal and~$v$ any unit tangent vector to~$\partial\Omega$ at~$x$.
For $h>0$ we denote by~$\ell_v^h$ the length of the line segment $\Omega\cap(h\nu+v\R)$.

We have
\begin{equation}
\lim_{h\to0}\frac{(\ell_v^h)^2}{h}
=
\frac{8}{\sff(v,v)}.
\end{equation}
\end{lemma}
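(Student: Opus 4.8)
The plan is to reduce the statement to a one-dimensional Taylor expansion carried out in a planar slice. The line $h\nu+v\R$ lies in the two-dimensional affine plane $P=x+\mathrm{span}(v,\nu)$, and since $\Omega$ is convex, the set $\Omega\cap P$ is a convex planar region whose intersection with this line is a single segment. Thus $\ell_v^h$ is the length of a chord of the planar convex region $\Omega\cap P$, and the problem becomes genuinely two-dimensional. In $P$ I would introduce coordinates $(s,u)$ with $s$ measured along $v$ and $u$ along $\nu$, centred at $x$, so that the line becomes $\{(s,h):s\in\R\}$ and, near the origin, the boundary curve $\partial\Omega\cap P$ is a graph $u=\psi(s)$ with $\psi(0)=0$ and $\psi'(0)=0$. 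The inward-pointing normal convention identifies the Hessian of this graph with the second fundamental form, so
\begin{equation}
\psi(s)=\tfrac12\sff(v,v)\,s^2+\Order(s^3).
\end{equation}

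Second, I would extract the chord length from this local picture. Because $v$ has unit length, the segment $\Omega\cap(h\nu+v\R)$ corresponds to the set $\{s:\psi(s)<h\}$ with its length measured in $s$. For small $h$ this is a single interval $(s_-(h),s_+(h))$, since $\psi$ is strictly convex near $0$ thanks to $\sff(v,v)>0$. Solving $\psi(s_\pm)=h$ gives
\begin{equation}
s_\pm=\pm\sqrt{\frac{2h}{\sff(v,v)}}\,(1+\Order(\sqrt h)),
\end{equation}
whence $\ell_v^h=s_+(h)-s_-(h)=2\sqrt{2h/\sff(v,v)}\,(1+\order(1))$ and therefore $(\ell_v^h)^2/h\to 8/\sff(v,v)$, the claimed limit. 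This is consistent with the analysis in lemma~\ref{lma:d-int}: equation~\eqref{eq:t} shows $\abs{I_{v,h}}\approx 2\sqrt{2h/\sff(v,v)}$, so the same factor $8/\sff(v,v)$ governs $\abs{I_{v,h}}^2/h$ there.

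The step requiring the most care is the justification that the whole segment lies in the coordinate patch where the graph and its quadratic expansion are valid, for only then does the local computation control the genuine chord length. I would establish this by a compactness argument using strict convexity: the tangent hyperplane at $x$ meets $\overline\Omega$ only at $x$, and the line $h\nu+v\R$ sits at Euclidean distance $h$ from that hyperplane on the inward side, so any family of points of $\Omega$ lying on these lines must converge to $x$ as $h\to0$. Hence the segment shrinks to $x$ and eventually enters any fixed neighbourhood of $x$. This is the main obstacle in the sense that it is the only place where global rather than infinitesimal information about $\Omega$ is used; everything else is a routine expansion. A minor bookkeeping point is to confirm that the cubic remainder in $\psi$ perturbs the endpoints only by a relative factor $1+\Order(\sqrt h)$, which does not affect the limit.
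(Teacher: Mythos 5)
Your proof is correct, but it takes a genuinely different route from the paper's. The paper's proof is essentially a citation: it invokes \cite[Lemma 12]{I:bdy-det} (noting that the reference point of that lemma may be moved to the tip of the chord), which already yields the quantitative bound
\begin{equation}
\abs{\frac{\ell_v^h}{\sqrt{8h/\sff(v,v)}}-1}=\Order(\sqrt{h}),
\end{equation}
from which the limit is immediate; that cited lemma is a Riemannian statement whose underlying asymptotics are of the same boundary-normal-coordinate type as lemma~\ref{lma:d-int} and equation~\eqref{eq:t} here, which is precisely the consistency you observed at the end. You instead argue directly and purely Euclideanly: slice by the plane $x+\mathrm{span}(v,\nu)$, write the normal section as a graph $u=\psi(s)=\tfrac12\sff(v,v)s^2+\Order(s^3)$ (Euler's theorem on normal sections justifies identifying the Hessian of the graph with $\sff(v,v)$, with the correct sign for the inward normal), solve $\psi(s_\pm)=h$ for the chord endpoints, and use strict convexity plus compactness of $\overline\Omega$ to ensure the whole chord eventually lies inside the coordinate patch --- correctly identified as the one place where global information is needed. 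What the paper's route buys is brevity and a quantitative error rate that meshes with the Riemannian machinery of section~\ref{sec:bdy-det}; what your route buys is a self-contained elementary proof that needs no external reference and exploits the fact that this particular lemma is purely Euclidean. One minor remark: your compactness step tacitly uses boundedness of $\Omega$ (so that the tangent hyperplane meets the compact set $\overline\Omega$ only at $x$ forces the chords to shrink to $x$); boundedness is not written into the lemma's hypotheses, but it is assumed wherever the lemma is applied (theorem~\ref{thm:Rn-local}), so this matches the paper's implicit setting rather than constituting a gap.
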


\begin{proof}
It follows from~\cite[Lemma 12]{I:bdy-det} that
\begin{equation}
\abs{
\frac{\ell_v^h}{\sqrt{8h/\sff(v,v)}}
-1
}
=
\Order(\sqrt{h}).
\end{equation}
The reference time~$t_0$ of the cited lemma can be taken to be at the tip of the short geodesic rather than the base.

Therefore, as $h\to0$, we have
\begin{equation}
\frac{\ell_v^h}{\sqrt{h}}\times\sqrt{\frac{\sff(v,v)}{8}}
\to
1.
\end{equation}
This proves the claimed limit.
\end{proof}

\begin{lemma}
\label{lma:umbilical-slice}
Let $\Omega\subset\R^n$, $n\geq3$, be a strictly convex smooth domain and $x\in\partial\Omega$.
Let~$\nu$ the inward unit normal to~$\partial M$ at~$x$. 
If for some $h_0>0$ the intersection $(P+h\nu)\cap\Omega$ is a disc for almost all $h\in(0,h_0)$ and almost all two-planes~$P$ tangent to~$\partial\Omega$ at~$x$, then~$\partial\Omega$ is umbilical at~$x$.
\end{lemma}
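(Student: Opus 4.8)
The plan is to reduce the three-dimensional (or higher) statement to the planar case via Lemma~\ref{lma:R2}, and then to extract the shape of the boundary from the asymptotics of line lengths given by Lemma~\ref{lma:line-length}. First I would fix a two-plane $P$ tangent to $\partial\Omega$ at $x$ and observe that if $f\in L^1(\Omega)$ integrates to $1$ over every line, then in particular it integrates to $1$ over every line lying in the affine plane $P+h\nu$. Restricting $f$ to this plane yields (for almost every $h$, by Fubini) an $L^1$ function on the planar domain $\Omega_{P,h}=(P+h\nu)\cap\Omega$ whose planar X-ray transform is identically $1$. Lemma~\ref{lma:R2} then forces each such $\Omega_{P,h}$ to be a disc. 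This is the conceptual heart of the argument and uses only short lines, since the hypothesis is local near $x$.

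Next I would compute the radius of the disc $\Omega_{P,h}$ in two different ways and compare. On one hand, the line through the plane in direction $v\in P\cap T_x\partial\Omega$ at height $h$ is exactly the segment $\Omega\cap(h\nu+v\R)$ of Lemma~\ref{lma:line-length}, so its length $\ell_v^h$ satisfies $(\ell_v^h)^2/h\to 8/\sff(v,v)$ as $h\to0$. On the other hand, because $\Omega_{P,h}$ is a disc, the maximal chord (its diameter) is achieved and the chord lengths in different directions are governed purely by the disc geometry. The key point is that the intersection $\Omega_{P,h}$, after rescaling by $1/\sqrt{h}$, converges to an ellipse whose principal axes are determined by the second fundamental form restricted to $P\cap T_x\partial\Omega$: the squared half-length of the chord in direction $v$ behaves like $2h/\sff(v,v)$. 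If every slice is a disc, then for each fixed plane $P$ the two quantities $\sff(v,v)$ for $v$ ranging over the unit circle in $P\cap T_x\partial\Omega$ must be equal, i.e.\ $\sff(v,v)$ is constant on that circle.

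Finally I would let the plane $P$ vary over all tangent two-planes at $x$. Each admissible $P$ gives a great circle of directions in the unit sphere of $T_x\partial\Omega$ on which $v\mapsto\sff(v,v)$ is constant, and as $P$ ranges over almost all such planes these circles sweep out all of $S_x(\partial\Omega)$ and connect any two directions. Hence $\sff(v,v)$ is independent of $v\in S_x(\partial\Omega)$, which is precisely the statement that $\partial\Omega$ is umbilical at $x$.

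The main obstacle I anticipate is making the second step rigorous: one must control the shape of the slice $\Omega_{P,h}$ uniformly, not merely the length of one chord. Lemma~\ref{lma:line-length} gives the asymptotics of a single chord in a fixed direction $v$, but to conclude that a disc forces $\sff$ to be constant one needs that the ratio of chord lengths in two directions $v,v'\in P$ tends to $\sqrt{\sff(v',v')/\sff(v,v)}$, together with the fact that in a disc all diameters are equal. The cleanest route is probably to apply Lemma~\ref{lma:line-length} separately for two orthogonal directions $v,v'$ spanning $P$ and compare: the disc condition equates the diameters, while the two limits give $\sff(v,v)=\sff(v',v')$; since $P$ is arbitrary, varying $P$ through $v$ shows $\sff(v,v)$ is the same for all $v$. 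Verifying that "almost all planes" suffices for this connectivity argument—so that the null sets excluded in the hypothesis do not obstruct the sweep—is the technical point requiring care.
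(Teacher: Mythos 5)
Your proposal takes essentially the same route as the paper's proof: apply Lemma~\ref{lma:line-length} to chords of a disc slice in two directions $v_1,v_2\in P$, equate the rescaled lengths as $h\to0$ to get $\sff(v_1,v_1)=\sff(v_2,v_2)$, then vary $P$ over almost all tangent two-planes and use continuity of $\sff$ to pass from almost all pairs of directions to all pairs. One structural remark: your first paragraph (slicing $f$ by Fubini and invoking Lemma~\ref{lma:R2}) does not belong to this lemma at all --- the hypothesis here is already that the slices are discs, and the passage from $If\equiv1$ to discs is precisely what the proof of Theorem~\ref{thm:Rn-local} does before citing this lemma. A proof of the statement as given should start from the disc hypothesis.

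The technical obstacle you flag at the end is real, but your proposed fix does not close it, because the chords controlled by Lemma~\ref{lma:line-length} are not diameters: all the segments $\Omega\cap(h\nu+v\R)$ with $v\in P\cap T_x\partial\Omega$ pass through the common point $x+h\nu$, which need not be the center of the disc slice, and chords of a disc through a fixed off-center point have unequal lengths. (The paper's own proof makes the same leap, simply asserting that the two segments have equal length, so you are in good company.) The missing ingredient is that $x+h\nu$ is asymptotically the center. Writing $\partial\Omega$ near $x$ as a graph $z=\frac12\sff(y,y)+\Order(\abs{y}^3)$ over $T_x\partial\Omega$, the slice is sandwiched between the sets $\{y\in P:\frac12\sff(y,y)\pm C\abs{y}^3\le h\}$, each within Hausdorff distance $\Order(h)$ of the centrally symmetric ellipse $\{y\in P:\frac12\sff(y,y)\le h\}$; and a disc at Hausdorff distance $\Order(h)$ from a set symmetric about the origin must have its center within $\Order(h)$ of the origin. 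Since the radius of the slice is comparable to $\sqrt h$, every chord through $x+h\nu$ then differs from the diameter by $\Order(h^2/\sqrt h)=\order(\sqrt h)$, so after dividing by $\sqrt h$ both directional limits $\sqrt{8/\sff(v_i,v_i)}$ equal the limit of the rescaled diameter, and your comparison goes through. The remaining ``almost all planes'' point is easier than you fear: almost every linearly independent pair $(u,v)$ spans an admissible plane, so $\sff(u,u)=\sff(v,v)$ holds for a full-measure set of pairs, and this set is closed by continuity, hence is all pairs.
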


\begin{proof}
Take any such two-plane~$P$ that $(P+h\nu)\cap\Omega$ is a disc for almost all $h\in(0,h_0)$.
Take any two unit vectors $v_1,v_2$ in~$P$.
The line segments $(v_i\R+h\nu)\cap\Omega$ have therefore equal length.
Letting $h\to0$ also shows that lengths scaled by~$\sqrt{h}$ must be equal.
By lemma~\ref{lma:line-length} this means that $\sff(v_1,v_1)=\sff(v_2,v_2)$.

This holds for almost all two-planes $P$ by assumption, so $\sff(v,v)=\sff(u,u)$ for almost all pairs of unit vectors $u,v\in T_x(\partial\Omega)$.
By continuity it holds for all pairs and so~$\partial\Omega$ is umbilical at~$x$.
\end{proof}

A localized version of theorem~\ref{thm:Rn} can be formulated as follows.

\begin{theorem}
\label{thm:Rn-local}
Let $\Omega\subset\R^n$, $n\geq3$, be a bounded, smooth, and strictly convex domain.
Fix any $\eps>0$.
If $f\in L^1(\Omega)$ integrates to zero over almost all geodesics of length less than~$\eps$, then~$\Omega$ is a ball.
\end{theorem}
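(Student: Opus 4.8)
The plan is to deduce the higher-dimensional statement from the already established planar case, lemma~\ref{lma:R2}, using that near any boundary point the slices of~$\Omega$ by almost-tangent two-planes are so small that every one of their chords is a geodesic of length less than~$\eps$. Umbilicity of the entire boundary, and hence the conclusion that~$\Omega$ is a ball, will then follow from the local criterion in lemma~\ref{lma:umbilical-slice}. This is exactly the heuristic described before lemma~\ref{lma:line-length}, now executed using short geodesics only.

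First I would fix an arbitrary $x\in\partial\Omega$ with inward unit normal~$\nu$ and a two-plane~$P$ tangent to~$\partial\Omega$ at~$x$, and for small $h>0$ set $S_{P,h}=(P+h\nu)\cap\Omega$. Since~$\Omega$ is smooth and strictly convex and the plane $P+h\nu$ meets~$\partial\Omega$ transversally for $h>0$, each $S_{P,h}$ is a smooth, bounded, strictly convex planar domain, as required by lemma~\ref{lma:R2}. The convexity estimate underlying lemma~\ref{lma:line-length} shows that $S_{P,h}$ is contained in a ball of radius $\Order(\sqrt h)$ centred at $x+h\nu$; since~$\partial\Omega$ is compact and strictly convex, $\sff$ is bounded below by a positive constant, so this bound is uniform in~$x$ and~$P$. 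Hence there is $h_0>0$, depending only on~$\eps$ and~$\Omega$, such that for every $h\in(0,h_0)$ every chord of $S_{P,h}$ is a segment in~$\Omega$ of length less than~$\eps$, that is, a short geodesic.

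Next I would transfer the hypothesis to these slices by Fubini's theorem. Slicing~$f\in L^1(\Omega)$ by the two-planes that cut off thin caps near the boundary, for almost every such slice~$S$ the restriction $f|_S$ lies in $L^1(S)$ and its planar X-ray transform along each chord of~$S$ equals the integral of~$f$ along the corresponding line of~$\R^n$. Since every chord of a slice with $h<h_0$ is short, the hypothesis makes this planar transform constant; normalizing the constant to~$1$ places $f|_S$ in exactly the setting of lemma~\ref{lma:R2}, which forces~$S$ to be a disc. Reorganizing the thin slices according to their base point~$x$, tangent two-plane~$P$, and offset~$h$, this shows that $(P+h\nu)\cap\Omega$ is a disc for almost every~$x\in\partial\Omega$, almost every tangent two-plane~$P$, and almost every $h\in(0,h_0)$. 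For almost every~$x$, lemma~\ref{lma:umbilical-slice} then applies and shows that~$\partial\Omega$ is umbilical at~$x$.

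Since umbilicity holds at almost every boundary point, it holds at every point by continuity of~$\sff$, so~$\partial\Omega$ is umbilical. As~$\partial\Omega$ is a compact hypersurface in~$\R^n$ with $n\geq3$, the classification of umbilical hypersurfaces (e.g.~\cite[Exercise 8.6c]{dC:RG}) forces it to be a round sphere, and therefore~$\Omega$ is a ball. The step I expect to be the main obstacle is the measure-theoretic bookkeeping: one must make precise, through a coarea-type factorization of the geodesic measure, the passage from ``almost all short geodesics of~$\R^n$'' to ``almost all chords of almost every thin slice at almost every boundary point,'' while keeping the single threshold $h_0=h_0(\eps,\Omega)$ that guarantees every chord of each such slice is genuinely shorter than~$\eps$.
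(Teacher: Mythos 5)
Your proposal is correct and follows essentially the same route as the paper's own proof: slice~$\Omega$ by two-planes nearly tangent to the boundary (with a uniform threshold~$h_0$ coming from compactness and strict convexity so that all chords of such slices are shorter than~$\eps$), use Fubini to put $f$ restricted to almost every slice in the setting of lemma~\ref{lma:R2}, conclude the slices are discs, invoke lemma~\ref{lma:umbilical-slice} to get umbilicity at almost every boundary point, extend by continuity, and finish with the classification of umbilical hypersurfaces. Note only that the ``integrates to zero'' in the statement is evidently a typo for ``integrates to one'' (as the case $f\equiv0$ shows), and both you and the paper read it that way; your ``normalizing the constant to~$1$'' phrasing is the one slightly awkward spot, since a zero constant could not be so normalized.
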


\begin{proof}
Let us use the notation of the preceding lemmas.

By assumption, for almost every direction on the unit sphere the function integrates to one over almost every short geodesic normal to the chosen direction.
By strict convexity the map from a boundary point to the corresponding normal direction is a diffeomorphism $\partial\Omega\to S^{n-1}$.
If a hyperplane is close enough to being tangent to~$\partial\Omega$, all maximal geodesics contained in it have length less than~$\eps$.
By compactness the required closeness can be chosen to be uniform.
Therefore there is some $h_0>0$ so that for almost every $x\in\partial\Omega$ the following holds for almost every $h\in(0,h_0)$:
The function~$f$ is~$L^1$ in the intersection $(P+h\nu)\cap\Omega$ and its integral over almost every line is~$1$.

By lemma~\ref{lma:R2} this implies that every such $(P+h\nu)\cap\Omega$ is a disc.
We are thus in the setting of lemma~\ref{lma:umbilical-slice}, and we conclude that~$\partial\Omega$ is umbilical almost everywhere.
By continuity it is umbilical everywhere.
Therefore~$\Omega$ is a ball.
\end{proof}

\section{Radial manifolds}
\label{sec:radial}

\begin{proof}[Proof of proposition~\ref{prop:radial}]
By symmetry, it suffices to consider the two-dimensional case.
Namely, any geodesic is contained in a two-dimensional plane through the origin.
Given any initial conditions~$\gamma(0)$ and~$\dot\gamma(0)$, the plane is the one spanned by these two.
(The geodesic is radial if the two are linearly dependent.)
To see this, observe that reflection across this plane is an isometry and maps geodesics to geodesics but the initial conditions are invariant under it, whence the whole geodesic must be invariant under this reflection.

The closed Euclidean unit disc $D\subset\R^2$ is equipped with the metric $c^{-2}(r)g_\Euc$, where~$g_\Euc$ is the Euclidean metric.
We denote $\rho(r)=r/c(r)$, so that the Herglotz condition becomes $\rho'(r)>0$.
We seek a radial function~$f(r)$ so that $If\equiv1$.

As shown in \cite[Section 5.2]{dHI:abel} (and in a different notation earlier in~\cite{S:revolution}) that the integral of~$f$ over a geodesic whose radius (smallest value of Euclidean distance to the origin) is at $s\in(0,1)$ is
\begin{equation}
\A f(s)
=
2\int_s^1 f(r)
\left[
1-\left(\frac{\rho(s)}{\rho(r)}\right)^2
\right]^{-1/2}
\frac{\der r}{c(r)}.
\end{equation}
The Abel transform~$\A$ related to~$\rho$ has an explicit inversion formula~\cite{dHI:abel}, valid for any $f\in L^1(0,1)$:
\begin{equation}
\label{eq:abel-inv}
f(r)
=
-\frac{c(r)}{\pi}
\left.
\Der{x}
\int_x^1
\frac{\rho'(x)}{\rho(x)}
\left[
\left(\frac{\rho(z)}{\rho(x)}\right)^2-1
\right]^{-1/2}
\A f(z)
\der z
\right|_{x=r}.
\end{equation}
The desired property is that $\A f\equiv1$, and it is easy to verify that the function arising from this formula is indeed~$L^1$.

Using $Af\equiv1$ and changing the variable of integration from~$r$ to $\tilde r=\rho(r)$ shows that the integral of~\eqref{eq:abel-inv} is
\begin{equation}
\int_x^1
\frac{\rho'(z)}{\rho(z)}
\left[
\left(\frac{\rho(z)}{\rho(x)}\right)^2-1
\right]^{-1/2}
\der z
=
\frac\pi2-\arcsin(\rho(x)/\rho(1)).
\end{equation}
Differentiating with respect to~$x$ gives then
\begin{equation}
f(r)
=
\frac{c(r)}{\pi}
\rho'(r)
\frac1{\sqrt{\rho(1)^2-\rho(r)^2}}.
\end{equation}
Using the definition of~$\rho$ and simplifying gives the claimed formula.
\end{proof}

When $c\equiv1$, one can easily verify that the obtained radial function is exactly that of~\eqref{eq:radial-f}.

\end{document}